\newtheorem{theorem}{Theorem}[section]
\newtheorem{lemma}{Lemma}[section]
\theoremstyle{remark}
\newtheorem{remark}{Remark}[section]
\theoremstyle{definition}
\numberwithin{equation}{section}
\begin{document}

\title[Petrovskii parabolic systems in generalized Sobolev spaces]
{Some problems for Petrovskii parabolic systems\\ in generalized Sobolev spaces}

\author[Aleksandr Dyachenko, Valerii Los]{Aleksandr Dyachenko, Valerii Los}

\address{Aleksandr Dyachenko \newline National Technical University of Ukraine Igor Sikorsky Kyiv Polytechnic Institute,
Prospect Peremohy 37, 03056, Kyiv-56, Ukraine}
\email{ol\_v\_dyachenko@ukr.net}

\address{Valerii Los 
\newline National Technical University of Ukraine Igor Sikorsky Kyiv Polytechnic Institute,
Prospect Peremohy 37, 03056, Kyiv-56, Ukraine}
\email{v$\underline{\phantom{k}}$\,los@yahoo.com}

\subjclass[2010]{35K20, 46E35}

\date{29/03/2021}

\keywords{Parabolic initial-boundary value problem,
generalized Sobolev space, slowly varying function, isomorphism property,
interpolation with a function parameter.}

\begin{abstract}
We consider an inhomogeneous initial-boundary value problem for a Petrovskii parabolic system of second order PDEs. We prove that this problem induces isomorphisms between appropriate anisotropic generalized Sobolev spaces. The regularity of these spaces are given by a pair of real numbers and by a function parameter. The latter allows us to characterize the regularity of solutions to the problem more finely as compared with anisotropic Sobolev spaces.
\end{abstract}

\maketitle

\section{Introduction}\label{sec1}
The central result of the theory of parabolic initial-boundary value problems states that these problems are well posed in the sense of
Hadamard on appropriate pairs of some normed function spaces.
This theory generally uses anisotropic versions of classical H\"older spaces or Sobolev spaces (see, e.g., \cite{AgranovichVishik64, Solonnikov65, LadyzhenskajaSolonnikovUraltzeva67, LionsMagenes72ii, Eidelman94,ZhitarashuEidelman98}). In recent years, other classes of function spaces are applied more and more actively to parabolic problems; namely, spaces with mixed norms, Triebel--Lizorkin spaces, weighted spaces, spaces of generalized smoothness (see, e.g., \cite{Weidemaier05, Hummel21, DenkHieberPruess07, Lindemulder20, DongKim15, LosMikhailetsMurach17CPAA} and reference therein). Our paper continues the series of works
\cite{LosMikhailetsMurach17CPAA, Los16UMJ6, LosMurach17OpenMath, Los16UMJ9, Los16UMJ11, LosMikhailetsMurach21arXiv} aimed to build a theory of parabolic initial-boundary value problems in function spaces of generalized anisotropic smoothness. This smoothness is given by a pair of real numbers and by a radial function which varies slowly at infinity and characterizes supplementary smoothness with respect to that given by the numbers. Such spaces give a broad generalization of anisotropic versions of inner product Sobolev spaces applied usually to parabolic equations. These works have elaborated a relevant theory of solvability and maximal regularity for scalar parabolic problems (involving one PDE). The present paper considers a general enough inhomogeneous initial-boundary volume problem for a Petrovskii parabolic system of second-order PDEs. We prove that this problem is well posed on appropriate pairs of generalized anisotropic Sobolev spaces just mentioned. This result is formulated in terms of an isomorphism theorem and is proved with the help of the quadratic interpolation (with function parameter) between some anisotropic Sobolev spaces. The special case of homogeneous Cauchy data was studied in \cite{Los17UMJ3, Los20MFAT2}. Note that various spaces of generalized smoothness proved to be useful in the theory of PDEs \cite{Hermander63, Hermander83, MikhailetsMurach14, NicolaRodino10, Paneah00} and the theory of stochastic processes \cite{Jacob010205}. Specifically, the monograph \cite{MikhailetsMurach14} presents a theory of elliptic boundary-value problems for isotropic analogs of the spaces used in the present paper (see also \cite{AnopDenkMurach21CPAA, AnopKasirenko16MFAT} for elliptic problems in broader classes of generalized Sobolev spaces).

\section{Statement of the problem}\label{sec2}
We arbitrarily choose an integer $n\geq2$ and a real number $\tau>0$. Let $G$ be a bounded domain in $\mathbb{R}^{n}$ with an infinitely smooth boundary $\Gamma:=\partial G$. We put
$\Omega:=G\times(0,\tau)$ and $S:=\Gamma\times(0,\tau)$; thus, $\Omega$ is an open cylinder in $\mathbb{R}^{n+1}$, and $S$ is its lateral boundary. Then $\overline{\Omega}:=\overline{G}\times[0,\tau]$ and
$\overline{S}:=\Gamma\times[0,\tau]$ are the closures of $\Omega$ and $S$ respectively.

Consider the following parabolic initial-boundary-value problem in $\Omega$:
\begin{equation}\label{25f1}
\begin{split}
\partial_t u_j(x,t)+
\sum_{k=1}^{N}\sum_{|\alpha|\leq2}&a^{\alpha}_{j,k}(x,t)\,D^\alpha_x u_{k}(x,t)=f_{j}(x,t)\\
&\mbox{for all}\quad (x,t)\in\Omega\quad\mbox{and}\quad
j\in\{1,\dots,N\};
\end{split}
\end{equation}
\begin{equation}\label{25f2}
\begin{split}
\sum_{k=1}^{N}\sum_{|\alpha|\leq l_j}&b^{\alpha}_{j,k}(x,t)\,D^\alpha_x u_{k}(x,t)\big|_{S}=g_{j}(x,t)\\
&\mbox{for all}\quad (x,t)\in S\quad\mbox{and}\quad
j\in\{1,\dots,N\};
\end{split}
\end{equation}
\begin{equation}\label{25f3}
u_{j}(x,t)\big|_{t=0}=h_{j}(x)\quad
\mbox{for all}\quad x\in G\quad\mbox{and}\quad j\in\{1,\ldots,N\}.
\end{equation}
Here, the integer $N\geq2$ is arbitrarily chosen,
and  $l_1,\ldots,l_N\in\{0,\,1\}$.
All coefficients of the partial differential expressions in formulas \eqref{25f1} and \eqref{25f2} are supposed to be infinitely smooth complex-valued functions given on $\overline{\Omega}$ and $\overline{S}$ respectively; i.e., each
\begin{equation*}
a_{j,k}^{\alpha,\beta}\in C^{\infty}(\overline{\Omega}):=
\bigl\{w\!\upharpoonright\overline{\Omega}\!:\,w\in C^{\infty}(\mathbb{R}^{n+1})\bigr\}
\end{equation*}
and each
\begin{equation*}
b_{j,k}^{\alpha,\beta}\in C^{\infty}(\overline{S}):=
\bigl\{v\!\upharpoonright\overline{S}\!:\,v\in C^{\infty}(\Gamma\times\mathbb{R})\bigr\}.
\end{equation*}
We use the notation
$D^\alpha_x:=D^{\alpha_1}_{1}\dots D^{\alpha_n}_{n}$, with $D_{k}:=i\,\partial/\partial{x_k}$, and $\partial_t:=\partial/\partial t$
for the partial derivatives of functions depending on $x=(x_1,\ldots,x_n)\in\mathbb{R}^{n}$ and $t\in\mathbb{R}$. Here, $i$ is imaginary unit, and $\alpha=(\alpha_1,...,\alpha_n)$ is a multi-index, with $|\alpha|:=\alpha_1+\cdots+\alpha_n$. In formulas \eqref{25f1} and \eqref{25f2} and their analogs, we take summation over the integer-valued nonnegative indices $\alpha_1,...,\alpha_n$ that satisfy the condition written under the integral sign.

We assume that the initial-boundary value problem \eqref{25f1}--\eqref{25f3} is Petrovskii parabolic in the cylinder $\Omega$. Let us recall the corresponding definition \cite[Section~1, \S~1]{Solonnikov65}.
Let
\begin{equation}\label{25f4}
A_{j,k}(x,t,D_x,\partial_t):=\delta_{j,k}\partial_t+
\sum_{|\alpha|\leq 2}a^{\alpha}_{j,k}(x,t)\,D^\alpha_x
\end{equation}
and
\begin{equation}\label{25f5}
B_{j,k}(x,t,D_x):=\sum_{|\alpha|\leq l_{j}}b^{\alpha}_{j,k}(x,t)\,D^\alpha_x
\end{equation}
for all $j,k\in\{1,\ldots,N\}$. Here, $\delta_{j,k}$ is the Kronecker delta.
Using notation \eqref{25f4} and \eqref{25f5} we may rewrite
all equalities in \eqref{25f1} and \eqref{25f2} as follows:
\begin{equation*}
\sum_{k=1}^{N}A_{j,k}(x,t,D_x,\partial_t)u_{k}(x,t)=f_{j}(x,t)
\end{equation*}
and
\begin{equation*}
\sum_{k=1}^{N}B_{j,k}(x,t,D_x)u_{k}(x,t)\big|_{S}=
g_{j}(x,t).
\end{equation*}
Define the principal symbols of the linear partial differential operators \eqref{25f4} and \eqref{25f5} by the formulas
\begin{equation*}
A^{(0)}_{j,k}(x,t,\xi,p):=\delta_{j,k}p+
\sum_{|\alpha|=2}a^{\alpha}_{j,k}(x,t)\,\xi^\alpha
\end{equation*}
and
\begin{equation*}
B^{(0)}_{j,k}(x,t,\xi):=\sum_{|\alpha|=
l_{j}}b^{\alpha}_{j,k}(x,t)\,\xi^\alpha.
\end{equation*}
These symbols are homogeneous polynomials in
$\xi:=(\xi_{1},\ldots,\xi_{n})\in\mathbb{C}^{n}$ and $p\in\mathbb{C}$ jointly (as usual, $\xi^\alpha:=\xi_{1}^{\alpha_{1}}\ldots\xi_{n}^{\alpha_{n}}$).
Consider the matrices
\begin{equation*}
A^{(0)}(x,t,\xi,p):=\bigl(A^{(0)}_{j,k}(x,t,\xi,p)\bigr)_{j,k=1}^{N}\\
\end{equation*}
and
\begin{equation*}
B^{(0)}(x,t,\xi):=\bigl(B^{(0)}_{j,k}(x,t,\xi)\bigr)_{j,k=1}^{N}.
\end{equation*}
The problem \eqref{25f1}--\eqref{25f3} is said to be Petrovskii parabolic in $\Omega$ if it satisfies the following two conditions (i) and (ii):
\begin{itemize}
\item [(i)] For arbitrary points $x\in\overline{G}$ and $t\in[0,\tau]$ and every vector $\xi\in\mathbb{R}^{n}$, all the roots $p(x,t,\xi)$ of the polynomial $\det A^{(0)}(x,t,\xi,p)$ in $p\in\mathbb{C}$ satisfy the inequality $\mathrm{Re}\,p(x,t,\xi)\leq -\delta\,|\xi|^{2b}$ for some number $\delta>0$ that does not depend on $x$, $t$, and~$\xi$.
\end{itemize}

To formulate Condition (ii), we fix a number $\delta_1\in(0,\delta)$, where $\delta$ has appeared in Condition~(i), and then arbitrarily choose a point $x\in\Gamma$, real number $t\in[0,\tau]$, vector $\xi\in\mathbb{R}^{n}$ tangent to the boundary $\Gamma$ at $x$, and number $p\in\mathbb{C}$ such that $\mathrm{Re}\,p\geq -\delta_1|\xi|^{2b}$ and $|\xi|+|p|\neq0$. Let $\nu(x)$ denote the unit vector of the inward normal to $\Gamma$ at $x$. It follows from Condition~(i) and the inequality $n\geq2$ that the polynomial
$\det A^{(0)}(x,t,\xi+\zeta\nu(x),p)$ in $\zeta\in\mathbb{C}$ has $m$ roots $\zeta^{+}_{j}(x,t,\xi,p)$, $j=\nobreak1,\ldots,m$, with positive imaginary part and $m$ roots with negative imaginary part, taking into account multiplicity of roots.

The second condition is formulated as follows:
\begin{itemize}
\item [(ii)] For some positive number $\delta_1<\delta$ and for every choice of the parameters $x$, $t$, $\xi$ and $p$ indicated above, the rows of the matrix
\begin{equation*}
B^{(0)}(x,t,\xi+\zeta\nu(x),p)\cdot\widetilde{A}^{(0)}
(x,t,\xi+\zeta\nu(x),p)
\end{equation*}
are linearly independent modulo the polynomial
$\prod_{j=1}^{m}(\zeta-\zeta^{+}_{j}(x,t,\xi,p))$. Here,  $\widetilde{A}^{(0)}$ is the transposed matrix of the cofactors of entries of $A^{(0)}$.
\end{itemize}

We investigate parabolic problem \eqref{25f1}--\eqref{25f3}
in appropriate generalized Sobolev inner product spaces considered in the next section. All functions (and distributions) are supposed to be complex-valued.

\section{Generalized Sobolev spaces related to the problem}\label{sec3}
Following \cite[Section~3]{LosMurach17OpenMath} (see also \cite[Section~3]{LosMikhailetsMurach17CPAA}), we consider generalized Sobolev spaces used for the investigation of the parabolic problem \eqref{25f1}--\eqref{25f3}. They are parameterized with two numbers $s$ and $s/2$, where $s\in\mathbb{R}$, and with a function $\varphi\in\mathcal{M}$. The class $\mathcal{M}$ is defined to consist of all Borel measurable functions
$\varphi:[1,\infty)\rightarrow(0,\infty)$ such that
\begin{itemize}
\item [($\ast$)] both the functions $\varphi$ and $1/\varphi$ are bounded on each
compact interval $[1,d]$, with $1<d<\infty$;
\item [($\ast\ast$)] $\varphi$ is a slowly varying function at infinity in the sense of J.~Karamata \cite{Karamata30a}; i.e.,
\begin{equation}\label{25f3.1}
\lim_{r\rightarrow\infty}\frac{\varphi(\lambda r)}{\varphi(r)}=1\quad\mbox{for
every}\quad \lambda>0.
\end{equation}
\end{itemize}
The theory of slowly varying functions is set forth in \cite{BinghamGoldieTeugels89}. We give an important and standard example of functions satisfying \eqref{25f3.1} by putting
\begin{equation}\label{25f3.2}
\varphi(r):=(\log r)^{\theta_{1}}\,(\log\log r)^{\theta_{2}} \ldots
(\,\underbrace{\log\ldots\log}_{k\;\mbox{\small{times}}}r\,)^{\theta_{k}}
\quad\mbox{for}\quad r\gg1,
\end{equation}
where the parameters $k\in\mathbb{N}$ and $\theta_{1},
\theta_{2},\ldots,\theta_{k}\in\mathbb{R}$ are chosen arbitrarily. The functions
\eqref{25f3.2} form the logarithmic multiscale, which has a number of applications in
the theory of function spaces. Some other examples of slowly varying functions can
be found in \cite[Sec. 1.3.3]{BinghamGoldieTeugels89} and \cite[Sec.
1.2.1]{MikhailetsMurach14}.

By definition, the (complex) linear space $H^{s,s/2;\varphi}(\mathbb{R}^{k})$, where $2\leq k\in\mathbb{Z}$, consists of all tempered distributions $w$ on $\mathbb{R}^{k}$ whose (complete) Fourier transform $\widetilde{w}$ is locally Lebesgue integrable over $\mathbb{R}^{k}$ and satisfies the condition
\begin{equation}\label{norm}
\|w\|_{H^{s,s/2;\varphi}(\mathbb{R}^{k})}:=
\biggl(\;\int\limits_{\mathbb{R}^{k-1}}\int\limits_{\mathbb{R}}
r^{2s}(\xi,\eta)\,\varphi^{2}(r(\xi,\eta))\,
|\widetilde{w}(\xi,\eta)|^{2}\,d\xi\,d\eta\biggr)^{1/2}<\infty,
\end{equation}
where
$$
r(\xi,\eta):=\bigl(1+|\xi|^2+|\eta|\bigr)^{1/2}
\quad\mbox{for all}\;\;\xi\in\mathbb{R}^{k-1}\;\;\mbox{and}
\;\;\eta\in\mathbb{R}.
$$
This space is Hilbert and separable with respect to the norm \eqref{norm}.

It is a special case of the spaces $\mathcal{B}_{p,\mu}$ introduced by H\"ormander \cite[Section~2.2]{Hermander63}; namely, $H^{s,s/2;\varphi}(\mathbb{R}^{k})=
\mathcal{B}_{p,\mu}$ provided that $p=2$ and $\mu(\xi,\eta)\equiv r^{s}(\xi,\eta)\varphi(r(\xi,\eta))$.
The space $H^{s,s/2;\varphi}(\mathbb{R}^{k})$ give a broad generalization of the concept of Sobolev spaces (in the framework of Hilbert spaces).
If $\varphi(\cdot)\equiv1$, the space $H^{s,s/2;\varphi}(\mathbb{R}^{k})$ becomes the anisotropic Sobolev space $H^{s,s/2}(\mathbb{R}^{k})$.
Generally, we have the dense continuous embeddings
\begin{equation}\label{embeddings}
H^{s_{1},s_{1}/2}(\mathbb{R}^{k})\hookrightarrow
H^{s,s/2;\varphi}(\mathbb{R}^{k})\hookrightarrow
H^{s_{0},s_{0}/2}(\mathbb{R}^{k})\quad
\mbox{whenever}\quad s_{0}<s<s_{1}.
\end{equation}

Basing on $H^{s,s/2;\varphi}(\mathbb{R}^{k})$, consider some Hilbert function spaces  relating to the problem \eqref{25f1}--\eqref{25f3}. Let $V$ be an open nonempty set in $\mathbb{R}^{k}$. (Specifically, we need the case where
$V=\Omega$, with $k=n+1$.) Put
\begin{equation}\label{25f7}
H^{s,s/2;\varphi}(V):=\bigl\{w\!\upharpoonright\!V:\,
w\in H^{s,s/2;\varphi}(\mathbb{R}^{k})\bigl\}.
\end{equation}
The linear space \eqref{25f7} is endowed with the norm
\begin{equation}\label{25f8}
\|u\|_{H^{s,s/2;\varphi}(V)}:=
\inf\bigl\{\,\|w\|_{H^{s,s/2;\varphi}(\mathbb{R}^{k})}:
w\in H^{s,s/2;\varphi}(\mathbb{R}^{k}),\;\,
u=w\!\upharpoonright\!V\bigl\},
\end{equation}
where $u\in H^{s,s/2;\varphi}(V)$. This space is Hilbert and separable with respect to this norm. The set
$C^{\infty}(\overline{\Omega})$ is dense in $H^{s,s/2;\varphi}(\Omega)$.

We also need the space $H^{s,s/2;\varphi}(S)$ on the lateral boundary $S$ of the cylinder $\Omega$ (see \cite[Section~1]{Los16JMathSci}). We restricting ourselves to the $s>0$ case. Briefly saying, this space consists of all functions $v\in L_2(S)$ that yield functions from the space $H^{s,s/2;\varphi}(\Pi)$ on $\Pi:=\mathbb{R}^{n-1}\times(0,\tau)$ with the help of some local coordinates on $\overline{S}$. Let us recall the detailed definition.

We arbitrarily choose a finite atlas on $\Gamma$ of class $C^{\infty}$.
Let this atlas be formed by some local charts $\theta_{j}:\mathbb{R}^{n-1}\leftrightarrow\Gamma_{j}$, with
$j=1,\ldots,\lambda$. Here, $\Gamma_{1},\ldots,\Gamma_{\lambda}$ are open nonempty subsets of $\Gamma$ such that $\Gamma:=\Gamma_{1}\cup\cdots\cup\Gamma_{\lambda}$. We also arbitrarily choose functions
$\chi_{j}\in C^{\infty}(\Gamma)$, with $j=1,\ldots,\lambda$, such that
$\mathrm{supp}\,\chi_{j}\subset\Gamma_{j}$ and $\chi_{1}+\cdots+\chi_{\lambda}=1$ on $\Gamma$. Thus, these functions form a partition of unity on $\Gamma$.

By definition, the complex linear space $H^{s,s/2;\varphi}(S)$ consists of all functions $v\in L_2(S)$ such that the function $v_{j}(y,t):=\chi_{j}(\theta_{j}(y))v(\theta_{j}(y),t)$ of $y\in\mathbb{R}^{n-1}$ and $t>0$ belongs to $H^{s,s/2;\varphi}(\Pi)$ for each $j\in\{1,\ldots,\lambda\}$. (As usual, $L_2(S)$ denotes the space of all square integrable functions on the surface $S$.) The space $H^{s,s/2;\varphi}(S)$ is endowed with the norm
$$
\|v\|_{H^{s,s/2;\varphi}(S)}:=
\bigl(\|v_{1}\|_{H^{s,s/2;\varphi}(\Pi)}^{2}+\cdots+
\|v_{\lambda}\|_{H^{s,s/2;\varphi}(\Pi)}^{2}\bigr)^{1/2}.
$$
This space is Hilbert and separable with respect to this norm and does not depend up to equivalence of norms on the indicated choice of an atlas and partition of unity on~$\Gamma$ \cite[Theorem~1]{Los16JMathSci}.

We also need the isotropic generalized Sobolev spaces $H^{s;\varphi}(\mathbb{R}^{k})$, $H^{s;\varphi}(G)$, and $H^{s;\varphi}(\Gamma)$ investigated by Mikhailets and Murach \cite{MikhailetsMurach14, MikhailetsMurach12BJMA2}. Let $s\in\mathbb{R}$ and $\varphi\in\mathcal{M}$.
By definition, the complex linear space $H^{s;\varphi}(\mathbb{R}^{k})$, where $1\leq k\in\mathbb{Z}$, consists of all tempered distributions $w$ on $\mathbb{R}^{k}$ whose (complete) Fourier transform $\widetilde{w}$ is locally Lebesgue integrable over $\mathbb{R}^{k}$ and satisfies the condition
\begin{equation}\label{norm-isotrop}
\|w\|_{H^{s;\varphi}(\mathbb{R}^{k})}:=
\biggl(\;\int\limits_{\mathbb{R}^{k}}
\langle\xi\rangle^{2s}\,\varphi^{2}(\langle\xi\rangle)\,
|\widetilde{w}(\xi)|^{2}\,d\xi\biggr)^{1/2}<\infty.
\end{equation}
Here, as usual, $\langle\xi\rangle:=(1+|\xi|^{2})^{1/2}$ is the smooth modulus of
$\xi\in\mathbb{R}^{k}$.
This space is Hilbert and separable with respect to the norm \eqref{norm-isotrop}.
Notice that $H^{s;\varphi}(\mathbb{R}^{k})$ is the inner product space
$\mathcal{B}_{2,\mu}(\mathbb{R}^{k})$ corresponding to the function parameter
$\mu(\xi):=\langle\xi\rangle^{s}\varphi(\langle\xi\rangle)$ of $\xi\in\mathbb{R}^{k}$.
The space $H^{s;\varphi}(\mathbb{R}^{k})$ is isotropic because the function $\mu(\xi)$ depends only on $|\xi|$.
If $\varphi(\cdot)\equiv1$, the space $H^{s;\varphi}(\mathbb{R}^{k})$ becomes the isotropic Sobolev space $H^{s}(\mathbb{R}^{k})$.

Basing on $H^{s;\varphi}(\mathbb{R}^{k})$, consider some Hilbert function spaces on $G$ and $\Gamma$.
Let $V$ be an open nonempty set in $\mathbb{R}^{k}$. (Specifically, we need the case where
$V=G$, with $k=n$.) Put
\begin{equation}\label{25f7A}
H^{s;\varphi}(V):=\bigl\{w\!\upharpoonright\!V:\,
w\in H^{s;\varphi}(\mathbb{R}^{k})\bigl\}.
\end{equation}
The norm in the linear space \eqref{25f7A} is defined by the formula
\begin{equation}\label{25f8A}
\|h\|_{H^{s;\varphi}(V)}:=\,
\inf\bigl\{\,\|w\|_{H^{s;\varphi}(\mathbb{R}^{k})}:
w\in H^{s;\varphi}(\mathbb{R}^{k}),\;\,
h=w\!\upharpoonright\!V\bigl\},
\end{equation}
with $h\in H^{s;\varphi}(V)$. This space is Hilbert and separable with respect to this norm.

The linear space $H^{s;\varphi}(\Gamma)$ is defined to consist of all distributions $\omega$ on $\Gamma$ such that every distribution
$\omega_{j}(y):=\chi_{j}(\theta_{j}(y))\,\omega(\theta_{j}(y))$ of
$y\in\mathbb{R}^{n-1}$ belongs to $H^{s;\varphi}(\mathbb{R}^{n-1})$, where $j\in\{1,\ldots,\lambda\}$. The space $H^{s;\varphi}(\Gamma)$ is equipped with the norm
\begin{equation*}
\|\omega\|_{H^{s;\varphi}(\Gamma)}:=\bigl(
\|\omega_{1}\|_{H^{s;\varphi}(\mathbb{R}^{n-1})}^{2}+\cdots+
\|\omega_{\lambda}\|_{H^{s;\varphi}(\mathbb{R}^{n-1})}^{2}\bigr)^{1/2}.
\end{equation*}
This space is Hilbert and separable and is independent (up to equivalence of norms) of our choice of local charts and partition  of the unit on $\Gamma$ (see \cite[Theorem 2.1]{MikhailetsMurach14}).

If $\varphi\equiv1$, the spaces $H^{s,s/2;\varphi}(\cdot)$ and $H^{s;\varphi}(\cdot)$ become the inner product Sobolev spaces $H^{s,s/2}(\cdot)$ and $H^{s}(\cdot)$, respectively. Owing to \eqref{embeddings}, we have the embeddings
\begin{equation}\label{25f5a}
H^{s_{1},s_{1}/2}(\cdot)\hookrightarrow
H^{s,s/2;\varphi}(\cdot)\hookrightarrow
H^{s_{0},s_{0}/2}(\cdot)\quad\mbox{whenever}\quad s_{0}<s<s_{1}.
\end{equation}
Besides,
\begin{equation}\label{25f5b}
H^{s_{1}}(\cdot)\hookrightarrow
H^{s;\varphi}(\cdot)\hookrightarrow
H^{s_{0}}(\cdot)\quad\mbox{whenever}\quad s_{0}<s<s_{1},
\end{equation}
as is noted in \cite[Theorems 2.3(iii) and 3.3(iii)]{MikhailetsMurach14}. All these embeddings are continuous and dense.

Generally, if $\varphi(\cdot)\equiv1$, we will suppress the index $\varphi$ in the designations of relevant spaces.

\section{Main result}\label{sec4}

We  will formulate an isomorphism theorem for the parabolic problem \eqref{25f1}--\eqref{25f3} in the generalized Sobolev spaces introduced in pervious section. Considering this problem, we put $u:=(u_1,\ldots,u_N)$, $f:=(f_1,\ldots,f_N)$, $g:=(g_1,\ldots,g_N)$, and $h:=(h_1,\ldots,h_N)$.
We write the system \eqref{25f1} and the boundary conditions \eqref{25f2} in the matrix form
$Au=f$ and $Bu|_S=g$; here
$$
A:=(A_{j,k}(x,t,D_x,\partial_t))_{j,k=1}^N\quad\mbox{and}\quad
B:=\bigl(B_{j,k}(x,t,D_x)\bigr)_{j,k=1}^N
$$
are matrix differential operators. We associate the linear mapping
\begin{equation}\label{25f4.1}
u\mapsto\Lambda
u:=\bigl(Au,Bu,u\!\upharpoonright\!G\bigr),\quad\mbox{where}\;\,u\in
\bigl(C^{\infty}(\overline{\Omega})\bigr)^{N},
\end{equation}
with the problem \eqref{25f1}--\eqref{25f3}.

In order that a regular enough solution $u$ to the problem \eqref{25f1}--\eqref{25f3} exist, the right-hand sides to the problem should satisfy certain compatibility conditions (see, e.g., \cite[Section~14]{Solonnikov65}).
These conditions consist in that the partial derivatives $\partial^r_t u_j(x,t)\big|_{t=0}$, which can be found from the parabolic system \eqref{25f1} and initial conditions \eqref{25f3}, should satisfy the boundary conditions \eqref{25f2} and some relations obtained by the differentiation of the boundary conditions with respect to~$t$. To write these compatibility conditions, we consider problem \eqref{25f1}--\eqref{25f3} in the corresponding pairs of Sobolev spaces.

Let real $s\geq2$. The mapping \eqref{25f4.1} extends uniquely (by continuity)
to a bounded linear operator
\begin{equation}\label{25f4a}
\begin{split}
\Lambda:\:& \bigl(H^{s,s/2}(\Omega)\bigr)^N\\
&\leftrightarrow
\bigl(H^{s-2,s/2-1}(\Omega)\bigr)^N\oplus
\bigoplus_{j=1}^{N}H^{s-l_j-1/2,(s-l_j-1/2)/2}(S)\oplus\bigl(H^{s-1}(G)\bigr)^N.
\end{split}
\end{equation}
This follows directly from \cite[Chapter~I, Lemma~4, and Chapter~II, Theorems 3 and 7]{Slobodetskii58}.

Choosing any vector-function $u(x,t)$ from the space $(H^{s,s/2}(\Omega))^N$, we define the right-hand sides
\begin{equation}\label{25f4b}
\begin{gathered}
f\in\bigl(H^{s-2,s/2-1}(\Omega)\bigr)^N,\quad \\
g\in \bigoplus_{j=1}^{N}H^{s-l_j-1/2,(s-l_j-1/2)/2}(S),
\quad\mbox{and}\quad h\in\bigl(H^{s-1}(G)\bigr)^N
\end{gathered}
\end{equation}
of the problem by the formula
$(f,g,h):=\Lambda u$
with the help of the bounded operator \eqref{25f4a}.

Compatibility conditions for functions $f_j$, $g_j$, and $h_j$ naturally arise in such a way.
According to \cite[Chapter~II, Theorem 7]{Slobodetskii58}, the traces
$\partial^{\,r}_t u_j(\cdot,0)\in H^{s-2r-1}(G)$ (for all $j\in\{1,\dots,N\}$) are well defined by closure for all $r\in\mathbb{Z}$ such that $0\leq r<s/2-1/2$ (and only for these $r$).
These traces are expressed
from the equations \eqref{25f1} and initial conditions \eqref{25f3}
in terms of the functions $f_j$ and $h_j$ by recurrent formula:
\begin{equation}\label{25f6}
\begin{split}
u_j(x,0)&=h_j(x)\quad\mbox{for all}
\quad j\in\{1,\dots,N\},\\
(\partial^{\,r}_t u_j)(x,0)&=-\sum_{k=1}^{N}
\sum_{|\alpha|\leq 2}
\sum\limits_{q=0}^{r-1}
\binom{r-1}{q}(\partial^{\,r-1-q}_t
a_{j,k}^{\alpha})(x,0)\,D^\alpha_x(\partial^{q}_t
u_k)(x,0)+\\
&+\partial^{\,r-1}_t f_j(x,0)\\
&\qquad\mbox{for all}\quad j\in\{1,\dots,N\}\quad\mbox{and}\\
&\qquad\mbox{for each}\quad r\in\mathbb{Z}\quad\mbox{such that}\quad 1\leq r<s/2-1/2.
\end{split}
\end{equation}
These equalities hold true for almost all $x\in G$.

Besides, according to \cite[Chapter~II, Theorem 7]{Slobodetskii58}, for each $j\in\{1,\dots,N\}$
the traces $\partial^{\,r}_t g_j(\cdot,0)\in H^{s-l_j-3/2-2r}(\Gamma)$
are well defined by closure for all $r\in\mathbb{Z}$
such that $0\leq r<(s-l_j-3/2)/2$ (and only for these $r$).
We express these traces in terms of the functions $u_j(x,t)$ and its time derivatives by the formula
\begin{equation}\label{25f4bb}
\begin{aligned}
(\partial^{r}_t g_j)(x,0)&=\partial^{r}_t\biggl(\sum_{k=1}^{N}\sum_{|\alpha|\leq l_j}b^{\alpha}_{j,k}(x,t)\,D^\alpha_x u_{k}(x,t)\biggr)|_{t=0}\\
&=\sum_{k=1}^{N}\sum_{|\alpha|\leq l_j}\,
\sum_{q=0}^{r}\binom{r}{q}
(\partial^{r-q}_t b^{\alpha}_{j,k})(x,0)\,
D^\alpha_x(\partial^{q}_t u_k)(x,0)
\end{aligned}
\end{equation}
for almost all $x\in\Gamma$. Here, all the functions
$(\partial^{\,q}_{t}u_k)(x,0)$
of $x\in G$ are expressed in terms of the functions $f_j(x,t)$ and $h_{j}(x)$ by the recurrent formula \eqref{25f6}.

Substituting \eqref{25f6} in the right-hand side of formula \eqref{25f4bb}, we obtain the compatibility conditions
\begin{equation}\label{25f8}
\begin{gathered}
\partial^{r}_t g_j\!\upharpoonright\!\Gamma=
\mathcal{B}_{j,r}(v_{1,0},\dots,v_{N,0},\dots,v_{1,r},\dots,v_{N,r})\!\upharpoonright\!\Gamma \\
\mbox{for each}\;\;j\in\{1,\dots,N\}\;\;\mbox{and}\;\;r\in\mathbb{Z}\\
\mbox{such that}\;\;0\leq r<(s-l_j-3/2)/2.
\end{gathered}
\end{equation}
Here,
the functions $v_{1,0},v_{2,0},\ldots$ are defined almost everywhere on $G$  by the formulas
\begin{equation}\label{25f9}
\begin{aligned}
v_{j,0}(x)&=h_j(x),\\
v_{j,r}(x)&=-\sum_{k=1}^{N}
\sum_{|\alpha|\leq 2}
\sum\limits_{q=0}^{r-1}
\binom{r-1}{q}(\partial^{\,r-1-q}_t
a_{j,k}^{\alpha})(x,0)\,D^\alpha_x v_{k,q}(x,0)+\\
&\qquad\quad+\partial^{\,r-1}_t f_j(x,0)
\quad\mbox{if}\quad r\geq1,
\end{aligned}
\end{equation}
and
\begin{equation}\label{25f9B}
\begin{aligned}
&\mathcal{B}_{j,r}(v_{1,0},\dots,v_{N,0},\dots,v_{1,r},\dots,v_{N,r})(x)\\
&=\sum_{k=1}^{N}\sum_{|\alpha|\leq l_j}\,
\sum_{q=0}^{r}\binom{r}{q}
(\partial^{r-q}_t b^{\alpha}_{j,k})(x,0)\,
D^\alpha_x v_{k,q}(x,0)
\end{aligned}
\end{equation}
for almost all $x\in G$. Note, that
\begin{equation*}
v_{j,r}\in H^{s-2r-1}(G)\quad\mbox{for each}\quad
r\in\mathbb{Z}\cap[0,s/2-1/2)
\end{equation*}
due to \eqref{25f4b}.
The right-hand side of the equality in \eqref{25f8} is well defined because the function
$\mathcal{B}_{j,r}(v_{1,0},\dots,v_{N,0},\dots,v_{1,r},\dots,v_{N,r})$ belongs to $H^{s-l_j-2r-1}(G)$ and the trace
\begin{equation}\label{25f69aa}
\mathcal{B}_{j,r}(v_{1,0},\dots,v_{N,0},\dots,v_{1,r},\dots,v_{N,r})\!\upharpoonright\!\Gamma\in H^{s-l_j-2r-3/2}(\Gamma)
\end{equation}
is therefore defined by closure whenever $s-l_j-2r-3/2>0$.

The number of the compatibility conditions \eqref{25f8} is a function of
$s\geq2$. This function is discontinuous at $s$ if and only if
$(s-l_j-3/2)/2\in\mathbb{Z}$. Thus, the set of all its discontinuities
coincides with
\begin{equation}\label{setE}
E:=\{2l+l_j+3/2:j,l\in\mathbb{Z},\;1\leq j\leq N,\;l\geq0\}
\cap(2,\infty).
\end{equation}
Note, if $s\leq5/2$ and $l_j=1$ for some $j$, there are no compatibility conditions involving~$g_{j}$.

Our result on the parabolic problem \eqref{25f1}--\eqref{25f3} consists in that the linear mapping \eqref{25f4.1} extends uniquely  to an isomorphism between appropriate pairs of generalized Sobolev spaces introduced in the previous section. Let us indicate these pairs.
We arbitrarily choose a real number $s>2$ and function parameter $\varphi\in\mathcal{M}$.
We take $(H^{s,s/2;\varphi}(\Omega))^N$ as the domain of this isomorphism. Its range is imbedded in the Hilbert space
\begin{align*}
\mathcal{H}^{s-2,s/2-1;\varphi}:=
&\bigl(H^{s-2,s/2-1;\varphi}(\Omega)\bigr)^N\\
&\oplus\bigoplus_{j=1}^{N}H^{s-l_j-1/2,(s-l_j-1/2)/2;\varphi}(S)
\oplus\bigl(H^{s-1;\varphi}(G)\bigr)^N
\end{align*}
and is denoted by  $\mathcal{Q}^{s-2,s/2-1;\varphi}$.
[If $\varphi\equiv1$, then $\mathcal{H}^{s-2,s/2-1;\varphi}$ is the target space of \eqref{25f4a}.]
We separately define $\mathcal{Q}^{s-2,s/2-1;\varphi}$
in the cases where $s\notin E$ and where $s\in E$.

Suppose first that $s\notin E$. By definition, the linear space $\mathcal{Q}^{s-2,s/2-1;\varphi}$ consists of all vectors
$$
F:=\bigl(f_1,\dots,f_N,g_1,\dots,g_N,h_1,\dots,h_N\bigr)\in
\mathcal{H}^{s-2,s/2-1;\varphi}
$$
that satisfy the compatibility conditions \eqref{25f8}.
These conditions are well defined for every indicated $F$ because
they are well defined whenever
$F\in\mathcal{H}^{s-\varepsilon-2,s/2-\varepsilon/2-1}$ and
$0<\varepsilon\ll 1$ and because
\begin{equation}\label{25f69a}
\mathcal{H}^{s-2,s/2-1;\varphi}\hookrightarrow
\mathcal{H}^{s-\varepsilon-2,s/2-\varepsilon/2-1}.
\end{equation}
This continuous embedding  follows directly from \eqref{25f5a} and \eqref{25f5b}.
We endow the linear space $\mathcal{Q}^{s-2,s/2-1;\varphi}$ with the norm in the Hilbert space
$\mathcal{H}^{s-2,s/2-1;\varphi}$. The space $\mathcal{Q}^{s-2,s/2-1;\varphi}$
is complete, i.e. Hilbert. Indeed,
$$
\mathcal{Q}^{s-2,s/2-1;\varphi}=
\mathcal{H}^{s-2,s/2-1;\varphi}\cap
\mathcal{Q}^{s-\varepsilon-2,s/2-\varepsilon/2-1}
$$
whenever $0<\varepsilon\ll 1$.
Here, the space $\mathcal{Q}^{s-\varepsilon-2,s/2-\varepsilon/2-1}$ is complete because the differential operators and trace operators used in the compatibility conditions are bounded on the corresponding pairs of Sobolev spaces. Therefore, the space
$$
\mathcal{H}^{s-2,s/2-1;\varphi}\cap
\mathcal{Q}^{s-\varepsilon-2,s/2-\varepsilon/2-1}
$$
is complete with respect to the sum of the norms in the components of the intersection, this sum being equivalent to the norm in $\mathcal{H}^{s-2,s/2-1;\varphi}$ due to \eqref{25f69a}. Thus, the space $\mathcal{Q}^{s-2,s/2-1;\varphi}$ is complete (with respect to the latter norm).

If $s\in E$, then we define the Hilbert space $\mathcal{Q}^{s-2,s/2-1;\varphi}$
by means of the quadratic interpolation between its analogs just introduced. Namely, we put
\begin{equation}\label{25f10}
\mathcal{Q}^{s-2,s/2-1;\varphi}:=
\bigl[\mathcal{Q}^{s-\varepsilon-2,s/2-\varepsilon/2-1;\varphi},
\mathcal{Q}^{s+\varepsilon-2,s/2+\varepsilon/2-1;\varphi}\bigr]_{1/2}.
\end{equation}
Here, the number $\varepsilon\in(0,1/2)$ is arbitrarily chosen, and the right-hand side of the equality is the result of the quadratic interpolation with the parameter~$1/2$ of the written pair of Hilbert spaces.
The Hilbert space $\mathcal{Q}^{s-2,s/2-1;\varphi}$ defined by formula \eqref{25f10} does not depend on our choice of $\varepsilon$ up to equivalence of norms and is continuously embedded in $\mathcal{H}^{s-2,s/2-1;\varphi}$.
This will be shown in Remark~\ref{25rem8.1} at the end of Section~\ref{sec6}.

Now, we may formulate the main result of the paper.

\begin{theorem}\label{25th4.1}
For arbitrary $s>2$ and $\varphi\in\nobreak\mathcal{M}$ the mapping \eqref{25f4.1} extends
uniquely (by continuity) to an isomorphism
\begin{equation}\label{25f11}
\Lambda :\,\bigl(H^{s,s/2;\varphi}(\Omega)\bigr)^N\leftrightarrow
\mathcal{Q}^{s-2,s/2-1;\varphi}.
\end{equation}
\end{theorem}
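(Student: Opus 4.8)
The plan is to deduce the theorem from the corresponding isomorphism in the \emph{classical} anisotropic Sobolev spaces (the case $\varphi\equiv1$) by means of quadratic interpolation with a function parameter. First I would record the base result: for every real $s>2$ with $s\notin E$, the mapping \eqref{25f4.1} extends to an isomorphism $\Lambda:(H^{s,s/2}(\Omega))^N\leftrightarrow\mathcal{Q}^{s-2,s/2-1}$. This is the solvability and maximal-regularity statement for Petrovskii parabolic problems and follows from the classical theory of such problems (see, e.g., \cite{Solonnikov65}) together with the trace and boundedness facts from \cite{Slobodetskii58} already invoked for \eqref{25f4a}; the role of the exclusion $s\notin E$ is precisely to keep the number of compatibility conditions \eqref{25f8} locally constant.

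Next I would realize every generalized Sobolev space occurring in \eqref{25f11} as an interpolation space between classical ones. By the Mikhailets--Murach interpolation theory, for any $s_0<s<s_1$ there is a single function parameter $\psi$, built from $\varphi$ and $s_0,s,s_1$, such that $H^{s,s/2;\varphi}(\cdot)=[H^{s_0,s_0/2}(\cdot),H^{s_1,s_1/2}(\cdot)]_\psi$ and likewise on $S$ and $G$. Since the weights differ only by constant shifts of the smoothness index and the gap $s_1-s_0$ and relative position of $s$ are fixed, the same $\psi$ serves all three geometries simultaneously; as quadratic interpolation commutes with finite direct sums, it also gives $\mathcal{H}^{s-2,s/2-1;\varphi}=[\mathcal{H}^{s_0-2,s_0/2-1},\mathcal{H}^{s_1-2,s_1/2-1}]_\psi$. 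For $s\notin E$ I would choose $s_0,s_1$ near $s$ with $(s_0,s_1)\cap E=\emptyset$ (possible since $E$ is discrete), so both endpoints admit the base isomorphism. Applying the interpolation functor to the bounded maps $\Lambda$ at $s_0$ and $s_1$ produces a bounded operator between the interpolated spaces whose domain is exactly $(H^{s,s/2;\varphi}(\Omega))^N$; the isomorphism property is inherited from the endpoints because $[\,\cdot,\cdot\,]_\psi$ is an exact interpolation functor.

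The crux is to identify the interpolated target $[\mathcal{Q}^{s_0-2,s_0/2-1},\mathcal{Q}^{s_1-2,s_1/2-1}]_\psi$ with the independently defined $\mathcal{Q}^{s-2,s/2-1;\varphi}$, the subspace of $\mathcal{H}$ cut out by the compatibility conditions \eqref{25f8}. Interpolation does not automatically commute with passage to such a subspace. The key observation I would use is that, because $(s_0,s_1)$ avoids $E$, the same finite family of conditions \eqref{25f8} describes $\mathcal{Q}$ at both endpoints, so $\mathcal{Q}$ is the common null space of one bounded ``compatibility operator'' acting consistently across the whole scale. Invoking the Mikhailets--Murach theorem on interpolation of subspaces that are kernels of operators possessing a uniform bounded right inverse, I would obtain $[\mathcal{Q}^{s_0-2,s_0/2-1},\mathcal{Q}^{s_1-2,s_1/2-1}]_\psi=\mathcal{Q}^{s-2,s/2-1;\varphi}$ with equivalence of norms and independence of the auxiliary choices. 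I expect the verification of the hypotheses of this subspace lemma---namely constructing the common right inverse of the compatibility operator on the scale---to be the main technical obstacle.

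Finally, the case $s\in E$ reduces to the previous one. Choosing $\varepsilon\in(0,1/2)$ with $s\pm\varepsilon\notin E$, the case already settled gives isomorphisms $\Lambda:(H^{s\pm\varepsilon,\ldots;\varphi}(\Omega))^N\leftrightarrow\mathcal{Q}^{s\pm\varepsilon-2,\ldots;\varphi}$. Interpolating these with the numerical parameter $1/2$ and using $[H^{s-\varepsilon,\ldots;\varphi}(\Omega),H^{s+\varepsilon,\ldots;\varphi}(\Omega)]_{1/2}=H^{s,s/2;\varphi}(\Omega)$ together with the very definition \eqref{25f10} of $\mathcal{Q}^{s-2,s/2-1;\varphi}$ yields the desired isomorphism \eqref{25f11}. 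Uniqueness of the continuous extension follows throughout from the density of $(C^{\infty}(\overline{\Omega}))^N$, and the independence of $\mathcal{Q}^{s-2,s/2-1;\varphi}$ of $\varepsilon$ (promised in Remark~\ref{25rem8.1}) guarantees that the statement is well posed.
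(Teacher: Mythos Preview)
Your proposal is correct and follows essentially the same route as the paper: the Sobolev base case (the paper cites \cite[Theorem~5.7]{ZhitarashuEidelman98}), interpolation with the function parameter $\psi$ to obtain $(H^{s,s/2;\varphi}(\Omega))^N$, identification of the interpolated target with $\mathcal{Q}^{s-2,s/2-1;\varphi}$ via a subspace--interpolation lemma (the paper's Lemma~\ref{25lem7.4}, proved by constructing an explicit projector $P_l$ of $\mathcal{H}$ onto $\mathcal{Q}$ using the operators $T$ from \cite[Lemma~6.1]{LosMurach17OpenMath}), and finally the $s\in E$ case via numerical interpolation with parameter $1/2$ exactly as you describe. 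The ``main technical obstacle'' you anticipate is precisely what the paper spends most of its effort on in the proof of Lemma~\ref{25lem7.4}.
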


This Theorem is known in the Sobolev case where $\varphi\equiv1$, which is proved in this case by Solonnikov
\cite[Theorem~5.4]{Solonnikov65} for general parabolic systems under the restriction  $s,s/2\in\mathbb{Z}$. This restriction can be removed, as is shown in Eidel'man and Zhitarashu's monograph \cite[Theorem~5.7]{ZhitarashuEidelman98}; their result includes the limiting case of $s=2$. Theorem~\ref{25th4.1} on isomorphisms has many applications.

Using this theorem, we can investigate global and local regularity of generalized solutions to the problem under consideration.
This theorem also allows us to obtain new sufficient conditions for the continuity of generalized solutions, specifically to find conditions under which the solutions are classical (c.f. \cite{LosMikhailetsMurach17CPAA, LosMikhailetsMurach21arXiv, Los16UMJ9, Los16UMJ11, Los17UMJ3, Los20MFAT2}). These applications will be given in another paper.

Note that the necessity to define the target space $\mathcal{Q}^{s-2,s/2-1;\varphi}$ separately in the  $s\in E$ case is caused by the following: if we defined this space for $s\in E$ in the way used in the $s\notin E$ case, then the isomorphism \eqref{25f11} would not be valid at least for $\varphi\equiv1$.
This follows from a result by Solonnikov \cite[Section~6]{Solonnikov64} (see also \cite[Remark~6.4]{LionsMagenes72ii}).

\section{Proof of the main result}\label{sec6}
We deduce Theorem \ref{25th4.1} from its known counterpart in the Sobolev case \cite[Theorem~5.7]{ZhitarashuEidelman98} with the help of the quadratic interpolation with a suitable function parameter. As to this interpolation, we mainly use definitions, notations, and properties given in \cite[Section~1.1]{MikhailetsMurach14}.
Let $X:=[X_{0},X_{1}]$ be an ordered pair of separable complex Hilbert spaces such that $X_1$ is a dense linear manifold in $X_0$ and that the
embedding $X_{1}\subseteq X_{0}$ is continuous. This pair is called regular. Let $[X_{0},X_{1}]_{\psi}$ denotes the Hilbert space, obtained by the quadratic interpolation with the function parameter $\psi$ of the pair $X=\nobreak[X_{0},X_{1}]$ or (in other words) between $X_{0}$ and $X_{1}$. We define the interpolation parameter $\psi$ by the formula
\begin{equation}\label{8f16}
\psi(r):=
\begin{cases}
\;r^{(s-s_{0})/(s_{1}-s_{0})}\,\varphi(r^{1/(s_{1}-s_{0})})&\text{if}
\quad r\geq1,\\
\;\varphi(1) & \text{if}\quad0<r<1,
\end{cases}
\end{equation}
where $s_{0},s,s_{1}\in\mathbb{R}$ satisfy $s_{0}<s<s_{1}$, and $\varphi\in\mathcal{M}$.

It follows from the definition of the space $\mathcal{Q}^{s-2,s/2-1;\varphi}$ that we need a relevant interpolation formula for this space in the case where $s\notin E$, with $E$ being  denoted by \eqref{setE}. Let $\{J_l:1\leq l\in\mathbb{Z}\}$ stand for the collection of all connected components of the set $(2,\infty)\setminus E$. Each  component $J_l$ is a certain finite subinterval of $(2,\infty)$.

\begin{lemma}\label{25lem7.4}
Let $1\leq l\in\mathbb{Z}$. Assume that real numbers $s_0,s,s_1\in J_{l}$ satisfy $s_0<s<s_1$ and that $\varphi\in\mathcal{M}$. Define an interpolation parameter $\psi$ by \eqref{8f16}.
Then
\begin{equation}\label{25f42}
\mathcal{Q}^{s-2,s/2-1;\varphi}=\,
\bigl[\mathcal{Q}^{s_0-2,s_0/2-1},
\mathcal{Q}^{s_1-2,s_1/2-1}\bigr]_{\psi}
\end{equation}
up to equivalence of norms.
\end{lemma}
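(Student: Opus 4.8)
The plan is to split \eqref{25f42} into two parts: an interpolation formula for the ambient space $\mathcal{H}$ (ignoring compatibility conditions), together with a statement that quadratic interpolation commutes with cutting out the compatibility subspace $\mathcal{Q}$. The hypothesis $s_0,s,s_1\in J_{l}$ is decisive here: since the number of compatibility conditions \eqref{25f8} is constant on each connected component of $(2,\infty)\setminus E$, all three spaces $\mathcal{Q}^{s_0-2,s_0/2-1}$, $\mathcal{Q}^{s-2,s/2-1;\varphi}$, $\mathcal{Q}^{s_1-2,s_1/2-1}$ are cut out by the \emph{same} finite family of conditions indexed by the pairs $(j,r)$, which is what allows a single operator to describe the subspace uniformly along the scale.

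First I would establish, up to equivalence of norms,
\[
\bigl[\mathcal{H}^{s_0-2,s_0/2-1},\mathcal{H}^{s_1-2,s_1/2-1}\bigr]_{\psi}
=\mathcal{H}^{s-2,s/2-1;\varphi}.
\]
Since $\mathcal{H}$ is a finite orthogonal sum of spaces $H^{\cdot,\cdot;\varphi}(\Omega)$, $H^{\cdot,\cdot;\varphi}(S)$ and $H^{\cdot;\varphi}(G)$, and interpolation commutes with finite direct sums \cite[Section~1.1]{MikhailetsMurach14}, it suffices to interpolate each summand. In every summand the two endpoint smoothness indices differ by the same amount $s_1-s_0$ (the fixed shifts $-2$, $-l_j-1/2$, $-1$ cancel in the difference), so the interpolation formulas for these generalized Sobolev scales established in \cite{MikhailetsMurach14, LosMurach17OpenMath} apply with the single parameter \eqref{8f16}; the resulting weight $\langle\xi\rangle^{\sigma_0}\psi(\langle\xi\rangle^{s_1-s_0})=\langle\xi\rangle^{\sigma}\varphi(\langle\xi\rangle)$, where $\sigma_0$ and $\sigma$ are the endpoint and intermediate smoothness indices of the summand, reproduces exactly the $\varphi$-refinement at level $s$.

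Next I would realize $\mathcal{Q}$ as the range of a projector that is bounded uniformly along the scale. Let $T$ denote the bounded defect operator sending $F=(f,g,h)$ to the tuple $\partial^{r}_t g_j\!\upharpoonright\!\Gamma-\mathcal{B}_{j,r}(v_{1,0},\dots,v_{N,r})\!\upharpoonright\!\Gamma$ over the relevant indices $(j,r)$, where the $v_{k,q}$ are produced from $(f,h)$ by the recursion \eqref{25f9} and $\mathcal{B}_{j,r}$ by \eqref{25f9B}, so that $\mathcal{Q}=\ker T$. All ingredients of $T$ (the differential expressions, the time-traces at $t=0$, and the spatial traces on $\Gamma$) are bounded on the Sobolev scale by the trace theorems underlying \eqref{25f4a} and \eqref{25f6}. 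Fixing a bounded right inverse $R$ of the relevant time-trace operator on the boundary scale, given by a formula independent of the smoothness index, I set $P:=I-RT$; since $TR$ is the identity on the trace range, $P$ is a projector with range $\ker T=\mathcal{Q}$, and being defined by index-free formulas it is one and the same operator bounded on $\mathcal{H}^{s_0-2,s_0/2-1}$ and on $\mathcal{H}^{s_1-2,s_1/2-1}$.

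Then the interpolation theorem for ranges of projectors \cite[Section~1.1]{MikhailetsMurach14} gives
\[
\bigl[\mathcal{Q}^{s_0-2,s_0/2-1},\mathcal{Q}^{s_1-2,s_1/2-1}\bigr]_{\psi}
=P\bigl[\mathcal{H}^{s_0-2,s_0/2-1},\mathcal{H}^{s_1-2,s_1/2-1}\bigr]_{\psi}
=P\,\mathcal{H}^{s-2,s/2-1;\varphi},
\]
the last equality by the first step. Finally, since $R$ is injective, an $F\in\mathcal{H}^{s-2,s/2-1;\varphi}$ satisfies $PF=F$ if and only if $TF=0$, i.e. if and only if $F$ meets the compatibility conditions \eqref{25f8}; as $s\in J_{l}\subset(2,\infty)\setminus E$, this is exactly the defining property of $\mathcal{Q}^{s-2,s/2-1;\varphi}$, so $P\,\mathcal{H}^{s-2,s/2-1;\varphi}=\mathcal{Q}^{s-2,s/2-1;\varphi}$, which yields \eqref{25f42}. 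I expect the main obstacle to be this third step: producing a single projector bounded at both endpoints with range exactly $\mathcal{Q}$ forces one to verify that the recursion \eqref{25f9}, the boundary operators \eqref{25f9B}, and the trace maps together with a fixed right inverse all act boundedly and consistently across $J_{l}$ — and it is precisely the constancy of the index set of conditions on $J_{l}$ that makes this possible, the very feature that fails at points of $E$.
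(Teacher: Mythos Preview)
Your proposal is correct and follows essentially the same route as the paper: first interpolate the ambient space $\mathcal{H}$ componentwise, then build a single projector onto $\mathcal{Q}$ bounded at both endpoints (the paper realizes your abstract $R$ as the operator $T$ from \cite[Lemma~6.1]{LosMurach17OpenMath} and modifies only the $g_j$-components), and finally invoke the projector-interpolation theorem \cite[Theorem~1.6]{MikhailetsMurach14} together with the constancy of the compatibility index set on $J_l$ to identify the result with $\mathcal{Q}^{s-2,s/2-1;\varphi}$. The only cosmetic difference is that the paper records the projector-interpolation output as $[\mathcal{H}^{s_0},\mathcal{H}^{s_1}]_\psi\cap\mathcal{Q}^{s_0-2,s_0/2-1}$ rather than as $P\,\mathcal{H}^{s-2,s/2-1;\varphi}$, which is of course the same subspace.
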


\begin{proof}
We first prove an analog of formula \eqref{25f42} for the space $\mathcal{H}^{s-2,s/2-1;\varphi}$.
According to \cite[Theorem~2]{Los16JMathSci}, \cite[Lemma~2]{Los16JMathSci} (with $\Omega$ instead $\Pi$) and \cite[Theorems~1.5, 1.14(i), 3.2]{MikhailetsMurach14}, we have
\begin{align*}
\bigl[&\mathcal{H}^{s_0-2,s_0/2-1},
\mathcal{H}^{s_1-2,s_1/2-1}\bigr]_{\psi} \\
&=\bigl(\bigl[H^{s_0-2,s_0/2-1}(\Omega),H^{s_1-2,s_1/2-1}(\Omega)\bigr]_{\psi}\bigr)^{N}\\
&\qquad
\oplus\bigoplus_{j=1}^{N}
\bigl[H^{s_0-l_j-1/2,\,(s_0-l_j-1/2)/2}(S),
H^{s_1-l_j-1/2,\,(s_1-l_j-1/2)/2}(S)\bigr]_{\psi}\\
&\qquad
\oplus\bigl(\bigl[H^{s_0-1}(G),H^{s_1-1}(G)\bigr]_{\psi}\bigr)^{N}\\
&=\bigl(H^{s-2,s/2-1;\varphi}(\Omega)\bigr)^{N}
\oplus\bigoplus_{j=1}^{N}H^{s-l_j-1/2,(s-l_j-1/2)/2;\varphi}(S)
\oplus\bigl(H^{s-1;\varphi}(G)\bigr)^{N}\\
&=\mathcal{H}^{s-2,s/2-1;\varphi}.
\end{align*}
Thus,
\begin{equation}\label{25f45}
\bigl[\mathcal{H}^{s_0-2,s_0/2-1},
\mathcal{H}^{s_1-2,s_1/2-1}\bigr]_{\psi}=
\mathcal{H}^{s-2,s/2-1;\varphi}
\end{equation}
up to equivalence of norms.

Consider the set
\begin{equation}\label{set-kj}
\biggl\{k\in\mathbb{Z}:0\leq k<\frac{1}{2}\,\biggl(s-l_j-\frac{3}{2}\biggr)\biggr\}
\end{equation}
for each $j\in\{1,\dots,N\}$. This set relates to the compatibility conditions \eqref{25f8} and does not depend on $s$ whenever $s$ ranges over an arbitrary chosen set $J_l$. Let $q_{l,j}$ denote the number of all elements of
\eqref{set-kj}.

Letting $1\leq l\in\mathbb{Z}$, we define a linear mapping $P_l$ on
\begin{equation}\label{16f43-aa}
\bigcup_{\sigma\in J_{l}}\mathcal{H}^{\sigma-2,\sigma/2-1}
\end{equation}
such that the restriction of $P_l$ to the space
$\mathcal{H}^{\sigma-2,\sigma/2-1}$ is a projector of this space on its subspace $\mathcal{Q}^{\sigma-2,\sigma/2-1}$ for every
$\sigma\in J_{l}$. For any vector
$$
F:=\bigl(f_1,\dots,f_N,g_1,\dots,g_N,h_1,\dots,h_N\bigr)\in
\bigcup_{\sigma\in J_{l}}\mathcal{H}^{\sigma-2,\sigma/2-1},
$$
we put
\begin{equation}\label{16f43}
\left\{
  \begin{array}{ll}
    g^{*}_{j}:=g_j & \hbox{whenever}\; q_{l,j}=0,\\
    g^{*}_{j}:=g_j+T_{l,j}(w_{j,0},\dots,w_{j,q_{l,j}-1})
    & \hbox{whenever}\;q_{l,j}\geq1
  \end{array}
\right.
\end{equation}
for each $j\in\{1,\dots,N\}$. Here,
\begin{align*}
w_{j,0}&:=\mathcal{B}_{j,0}(v_{1,0},\dots,v_{N,0})\!\upharpoonright\!\Gamma-g_j\!\upharpoonright\!\Gamma,\\
&\dots\\
w_{j,q_{l,j}-1}&:=\mathcal{B}_{j,q_{l,j}-1}(v_{1,0},\dots,v_{N,0},\dots,v_{1,q_{l,j}-1},\dots,v_{N,q_{l,j}-1})\!\upharpoonright\!\Gamma\\
&\,-\partial_{t}^{q_{l,j}-1} g_j\!\upharpoonright\!\Gamma,
\end{align*}
with the functions $v_{1,0},\dots,v_{N,q_{l,j}-1}$ and the differential operators $\mathcal{B}_{j,0},\ldots,\mathcal{B}_{j,q_{l,j}-1}$ being defined
by \eqref{25f9} and \eqref{25f9B} respectively and with $T_{l,j}$ denoting the linear mapping $T$ from
\cite[Lemma~6.1]{LosMurach17OpenMath} in the $r=q_{l,j}$ case.

The linear mapping
\begin{align*}
P_l&:\,\bigl(f_1,\dots,f_N,g_1,\dots,g_N,h_1,\dots,h_N\bigr)\\
   &\mapsto\bigl(f_1,\dots,f_N,g^*_1,\dots,g^*_N,h_1,\dots,h_N\bigr)
\end{align*}
given on \eqref{16f43-aa} is required.
Indeed, its restriction to the space $\mathcal{H}^{\sigma-2,\sigma/2-1}$ is a bounded operator on this space for every $\sigma\in J_l$, which follows from
\eqref{25f9}--\eqref{25f69aa} and \cite[Lemma~6.1]{LosMurach17OpenMath}. We use the boundedness of the operators $T$ from
\cite[Lemma~6.1]{LosMurach17OpenMath} in the case where
$r=q_{l,j}$, $s=\sigma-l_j-1/2$, and $\varphi(\cdot)\equiv1$,
with the condition $s>2r-1$ being satisfied because
\begin{equation*}
q_{l,j}<\frac{1}{2}\biggl(\sigma-l_j-\frac{3}{2}\biggr)+1
\quad\mbox{whenever}\quad\sigma\in J_l.
\end{equation*}
Besides, it follows from the definitions of $P_l$ and the space $\mathcal{Q}^{\sigma-2,\sigma/2-1}$
that
$$P_lF=(f_1,\dots,f_N,g^*_1,\dots,g^*_N,h_1,\dots,h_N)\in\mathcal{Q}^{\sigma-2,\sigma/2-1}$$
for every
$F\in\mathcal{H}^{\sigma-2,\sigma/2-1}$.
Indeed,
\begin{align*}
\partial^{r}_t g_j^*\!\upharpoonright\!\Gamma=&
\partial^{r}_t g_j\!\upharpoonright\!\Gamma+
\partial^{r}_t T_{l,j}(w_{j,0},\dots,w_{j,q_{l,j}-1})\!\upharpoonright\!\Gamma\\
=& \partial^{r}_t g_j\!\upharpoonright\!\Gamma+w_{j,r}\\
=& \partial^{r}_t g_j\!\upharpoonright\!\Gamma+
\mathcal{B}_{j,r}(v_{1,0},\dots,v_{N,0},\dots,v_{1,r},\dots,v_{N,r})\!\upharpoonright\!\Gamma-
\partial^{r}_t g_j\!\upharpoonright\!\Gamma\\
=&\mathcal{B}_{j,r}(v_{1,0},\dots,v_{N,0},\dots,v_{1,r},\dots,v_{N,r})\!\upharpoonright\!\Gamma
\end{align*}
for all $j$ and $r$ indicated in \eqref{25f8}.
Thus, the vector $P_{l}F$ satisfies the compatibility conditions, i.e. $P_{l}F\in\mathcal{Q}^{\sigma-2,\sigma/2-1}$.
Moreover, $F\in\mathcal{Q}^{\sigma-2,\sigma/2-1}$ implies that
$P_lF=F$. Namely, if $F\in\mathcal{Q}^{\sigma-2,\sigma/2-1}$, then
\eqref{25f8} holds, which entails that all $w_{j,r}=0$, i.e.
$g_1^*=g_1$, ..., $g_N^*=g_N$.

According to \cite[Theorem~1.6]{MikhailetsMurach14}, the pair
$$
\bigl[\mathcal{Q}^{s_0-2,s_0/2-1},
\mathcal{Q}^{s_1-2,s_1/2-1}\bigr]
$$
is regular, and
\begin{equation}\label{25f44}
\begin{aligned}
&\bigl[\mathcal{Q}^{s_0-2,s_0/2-1},
\mathcal{Q}^{s_1-2,s_1/2-1}\bigr]_{\psi}\\
&=\bigl[\mathcal{H}^{s_0-2,s_0/2-1},
\mathcal{H}^{s_1-2,s_1/2-1}\bigr]_{\psi}\cap
\mathcal{Q}^{s_0-2,s_0/2-1}
\end{aligned}
\end{equation}
up to equivalence of norms. Formulas \eqref{25f44} and \eqref{25f45} give
\begin{align*}
&\bigl[\mathcal{Q}^{s_0-2,s_0/2-1},
\mathcal{Q}^{s_1-2,s_1/2-1}\bigr]_{\psi}\\
&=\mathcal{H}^{s-2,s/2-1;\varphi}\cap
\mathcal{Q}^{s_0-2,s_0/2-1}=
\mathcal{Q}^{s-2,s/2-1;\varphi}.
\end{align*}
The latter equality holds true because $s,s_0\in J_l$; i.e., all
elements of the subspace $\mathcal{Q}^{s-2,s/2-1;\varphi}$
satisfy the same compatibility conditions as elements of
$\mathcal{Q}^{s_0-2,s_0/2-1}$.

\end{proof}

\begin{proof}[The proof of Theorem~$\ref{25th4.1}$.]
Let $s>2$ and $\varphi\in\mathcal{M}$. We first consider the case where $s\notin E$. Then $s\in J_{l}$ for some integer $l\geq1$.
Choose numbers $s_0,s_1\in J_{l}$ such that $s_0<s<s_1$ and that  $s_j+1/2\notin\mathbb{Z}$ and $s_j/2+1/2\notin\mathbb{Z}$ whenever $j\in\{0,1\}$.
According to Zhitarashu and Eidelman  \cite[Theorem~5.7]{ZhitarashuEidelman98}, the mapping \eqref{25f4.1}
extends uniquely (by continuity) to an isomorphism
\begin{equation}\label{25f49}
\Lambda:\,\bigl(H^{s_j,s_j/2}(\Omega)\bigr)^N\leftrightarrow
\mathcal{Q}^{s_j-2,s_j/2-1}
\quad\mbox{for each}\quad j\in\{0,1\}.
\end{equation}

We define
the interpolation parameter $\psi$ by \eqref{8f16}. It follows from \cite[Theorem~1.5]{MikhailetsMurach14}, \cite[Lemma~2]{Los16JMathSci} (with $\Omega$ instead $\Pi$) and
Lemma~\ref{25lem7.4} that that the restriction of the operator \eqref{25f49} with $j=0$ to the space
\begin{equation*}
\bigl[\bigl(H^{s_0,s_{0}/2}(\Omega)\bigr)^N,
\bigl(H^{s_1,s_{1}/2}(\Omega)\bigr)^N\bigr]_{\psi}
=\bigl(H^{s,s/2;\varphi}(\Omega)\bigr)^N
\end{equation*}
sets an isomorphism
\begin{equation*}\label{25f50}
\Lambda:\,\bigl(H^{s,s/2;\varphi}(\Omega)\bigr)^N \leftrightarrow
\bigl[\mathcal{Q}^{s_0-2,s_0/2-1},
\mathcal{Q}^{s_1-2,s_1/2-1}\bigr]_{\psi}
=\mathcal{Q}^{{s-2,s/2-1};\varphi}.
\end{equation*}
Since the set $(C^{\infty}(\overline{\Omega}))^N$ is dense in $(H^{s,s/2;\varphi}(\Omega))^N$, the operator \eqref{25f50} is an extension by continuity of \eqref{25f4.1}.
Hence, Theorem \ref{25th4.1} is proved in the $s\notin E$ case.

Now we consider the case $s\in E$.
Choose $\varepsilon\in(0,1/2)$ arbitrarily.
Since $s\pm\varepsilon\notin E$ and $s-\varepsilon>2$, we have the isomorphisms
\begin{equation}\label{8f37}
\Lambda:\bigl(H^{s\pm\varepsilon,(s\pm\varepsilon)/2;\varphi}(\Omega)\bigr)^N \leftrightarrow
\mathcal{Q}^{s\pm\varepsilon-2,(s\pm\varepsilon)/2-1;\varphi}
\end{equation}
as has just been proved. They imply that the mapping \eqref{25f4.1} extends uniquely (by continuity) to an isomorphism
\begin{equation}\label{8f38}
\begin{aligned}
&\Lambda:
\bigl[\bigl(H^{s-\varepsilon,(s-\varepsilon)/2;\varphi}(\Omega)\bigr)^N,
\bigl(H^{s+\varepsilon,(s+\varepsilon)/2;\varphi}(\Omega)\bigr)^N\bigr]_{1/2}\\
&\leftrightarrow
\bigl[\mathcal{Q}^{s-\varepsilon-2,(s-\varepsilon)/2-1;\varphi},
\mathcal{Q}^{s+\varepsilon-2,(s+\varepsilon)/2-1;\varphi}\bigr]_{1/2}=
\mathcal{Q}^{s-2,s/2-1;\varphi}.
\end{aligned}
\end{equation}
The last equality is the definition of the space $\mathcal{Q}^{s-2m,(s-2m)/(2b);\varphi}$.
Owing to \cite[Formula (61)]{LosMurach17OpenMath}, we have
\begin{equation}\label{8f38A}
\bigl[\bigl(H^{s-\varepsilon,(s-\varepsilon)/2;\varphi}(\Omega)\bigr)^N,
\bigl(H^{s+\varepsilon,(s+\varepsilon)/2;\varphi}(\Omega)\bigr)^N\bigr]_{1/2}\\
=\bigl(H^{s,s/2;\varphi}(\Omega)\bigr)^N.
\end{equation}
It remains to apply \eqref{8f38A} to \eqref{8f38}.
\end{proof}

\begin{remark}\label{25rem8.1}
Let $s\in E$. The space $\mathcal{Q}^{s-2,s/2-1;\varphi}$ defined by \eqref{25f10} does not depend on $\varepsilon\in(0,1/2)$ up to equivalence of norms.
Indeed, according to Theorem \ref{25th4.1}, the isomorphism
\begin{equation*}
\Lambda:\bigl(H^{s,s/2;\varphi}(\Omega)\bigr)^N\leftrightarrow
\bigl[\mathcal{Q}^{s-\varepsilon-2,(s-\varepsilon)/2-1;\varphi},
\mathcal{Q}^{s+\varepsilon-2,(s+\varepsilon)/2-1;\varphi}\bigr]_{1/2}
\end{equation*}
holds true whenever $0<\varepsilon<1/2$. This directly implies the mentioned independence.
Besides, the space $\mathcal{Q}^{s-2,s/2-1;\varphi}$ is embedded continuously in $\mathcal{H}^{s-2,s/2-1;\varphi}$.
Indeed, choosing $\varepsilon\in(0,1/2)$, we get the continuous embeddings
\begin{equation}\label{25f69}
\mathcal{Q}^{s\mp\varepsilon-2,(s\mp\varepsilon)/2-1;\varphi}
\hookrightarrow
\mathcal{H}^{s\mp\varepsilon-2,(s\mp\varepsilon)/2-1;\varphi}
\end{equation}
in view of $s\mp\varepsilon\in(2,\infty)\setminus E$ and the definition of the left-hand space.
Owing to the interpolation formula \eqref{8f38A} and its analogs for the spaces on $S$ and $G$, we obtain
\begin{equation}\label{25f67}
\mathcal{H}^{s-2,s/2-1;\varphi}=
\bigl[\mathcal{H}^{s-\varepsilon-2,(s-\varepsilon)/2-1;\varphi},
\mathcal{H}^{s+\varepsilon-2,(s+\varepsilon)/2-1;\varphi}\bigr]_{1/2}
\end{equation}
(see also \cite[Lemma 6.4]{LosMikhailetsMurach21arXiv}).
It follows from \eqref{25f69} that the embedding operator acts continuously from \eqref{25f10} to \eqref{25f67}, as was stated.
\end{remark}

\end{document}